\def\Xint#1{\mathchoice
{\XXint\displaystyle\textstyle{#1}}%
{\XXint\textstyle\scriptstyle{#1}}%
{\XXint\scriptstyle\scriptscriptstyle{#1}}%
{\XXint\scriptscriptstyle\scriptscriptstyle{#1}}%
\!\int}
\def\XXint#1#2#3{{\setbox0=\hbox{$#1{#2#3}{\int}$ }
\vcenter{\hbox{$#2#3$ }}\kern-.6\wd0}}
\def\dashint{\Xint-}
\newtheorem{theorem}{Theorem}
\newtheorem{lemma}{Lemma}
\newtheorem{remark}{Remark}
\newtheorem{definition}{Definition}
\newcommand{\RR}{\mathbb{R}}
\newcommand{\NN}{\mathbb{N}}
\newcommand{\Ei}{\text{\normalfont Ei}}
\newcommand{\li}{\text{\normalfont li}}
\newsavebox\foobox
\newlength{\foodim}
\newcommand{\defeq}{\vcentcolon=}
\title[Proving the Puiseux representation of the exponential integral]{An alternative proof of the Puiseux representation of the exponential integral}
\author{Glenn Bruda}
\thanks{\textit{Email:} \texttt{glenn.bruda@ufl.edu} (Glenn Bruda)}
\date{\today}
\begin{document}

\begin{abstract}
     Working from definitions and an elementarily obtained integral formula for the Euler-Mascheroni constant, we give an alternative proof of the classical Puiseux representation of the exponential integral. 
\end{abstract}

\maketitle

\section{Introduction}

The exponential integral, denoted $\Ei$, is a nonelementary\footnote{It may be shown that $\Ei$ is nonelementary via the Risch algorithm (see \cite{risch}*{Chapter 12} for an introduction on such).} special function which satisfies
\begin{align}
    \Ei(x)-\Ei(1)=\int_{1}^{x}\frac{e^t}{t}dt,
\end{align}
where $\Ei(1)\approx1.895$. More precisely, as a consequence of Theorem \ref{thm1},
\begin{align}
    \Ei(1)=\gamma+\sum_{n\geq1}\frac{1}{n(n!)},
\end{align}
where $\gamma$ is the Euler-Mascheroni constant.

The exponential integral finds extensive applications in physics and engineering. By \cite[\href{https://dlmf.nist.gov/7.5\#E13}{Equation 7.5.13}]{NIST:DLMF}, it may also be used to compute the Goodwin-Staton integral:
\begin{align}
    \int_{0}^{\infty}\frac{e^{-t^2}}{t+x}dt=e^{-x^2}\sqrt{\pi}\int_{0}^{x}e^{t^2}dt-\frac{1}{2}e^{-x^2}\Ei(x^2).
\end{align}
Furthermore, the salient logarithmic integral
\begin{align}
    \li(x)\defeq\int_{\mu}^{x}\frac{1}{\log t}dt
\end{align}
relates to the exponential integral by
\begin{align}
    \li(x)=\Ei(\log x),
\end{align}
where $\mu\approx1.451$ is the Ramanujan–Soldner constant (see \cite{soldner}).

To avoid a circular definition, we give an alternative characterization of the exponential integral using the Cauchy principal value, which assigns finite values to divergent integrals. Defining $\overline{\RR}\defeq\RR\cup\{+\infty,-\infty\}$ as the extended real number line, we give the following definition.

\begin{definition}[Cauchy principal value]
    Let $a,b\in\overline{\RR}$. For a function $f$ with a singularity at $c\in(a,b)$, we define
    \begin{align}
        \dashint_{a}^{b}f(t)dt\defeq\lim_{\varepsilon\to 0^+}
        \left(\int_{c+\varepsilon}^{b}f(t)dt+\int_{a}^{c-\varepsilon}f(t)dt\right).
    \end{align}
\end{definition}

Defining $\RR^{\times}\defeq\RR\backslash\{0\}$, the proper definition of the exponential integral is now given using a Cauchy principal value.

\begin{definition}[Exponential integral]
    We define $\Ei:\RR^{\times}\to\RR$ by
    \begin{align}
        \Ei(x)\defeq\dashint_{-\infty}^{x}\frac{e^t}{t}dt.
    \end{align}
\end{definition}

\begin{remark}
       When $x<0$, the Cauchy principal value $\Ei(x)$ coincides with the standard convergent integral since the limits of integration do not pass the singularity at the origin; that is, for $x<0$,
    \begin{align}
         \Ei(x)=\int_{-\infty}^{x}\frac{e^t}{t}dt.
    \end{align}
\end{remark}

A classical result on the exponential integral is that it admits the Puiseux representation (or Puiseux series\footnote{A Puiseux series is a power series which may contain fractional exponents and nested logarithms.})
\begin{align}
    \Ei(x)=\gamma+\log|x|+\sum_{n\geq1}\frac{x^n}{n(n!)}\label{puiseuxrep}
\end{align}
for any $x\in\RR^{\times}$. This form is quite useful for computing values of $\Ei$ since the power series converges so rapidly.

Since the integrand $e^t/t$ has a simple pole at $t=0$, the appearance of the logarithm is not surprising. Equally unsurprising is the power series in \eqref{puiseuxrep}, which is simply the antiderivative of the power series of $(e^t-1)/t$. The non-obvious aspect of this Puiseux representation is the presence of the Euler-Mascheroni constant; indeed, most of the work done to obtain $\eqref{puiseuxrep}$ involves determining why this constant appears.

The standard proof of \eqref{puiseuxrep}, e.g., \cite[pg.635]{standardproof}, involves calculating $\Gamma'(1)=-\gamma$, where $\Gamma$ is Euler's gamma function. In this article, we present an alternative proof of \eqref{puiseuxrep} using instead the elementarily obtained integral formula
\begin{align}\label{integralformula}
    \int_{0}^{1}\frac{1-e^{-t}-e^{-1/t}}{t}dt=\gamma
\end{align}
seen in \cite[pg.243]{EMintegral}. A thorough discussion expounding on the outline of the proof of \eqref{integralformula} given in \cite[pgs.236,243]{EMintegral} may be found in \cite{discussionofproof}.

\section{Proof}

First, we introduce a series formula for $\int_{1}^{x}f(t)dt$ which may be obtained simply by infinite integration by parts.

\begin{definition}[Harmonic numbers]
    For $n\in\NN$, we define $H_n\defeq\sum_{k=1}^{n}1/k$.
\end{definition}

\begin{lemma}\label{lem1}
    Suppose $f$ is analytic at $x\in\RR$ and $1$. Then
    \begin{align}
        \int_{1}^{x}f(t)dt=\sum_{n\geq0}(-1)^n g^{(n)}(x)\left(\frac{x^n}{n!}\left(\log|x|-H_n\right)\right)+\sum_{n\geq0}\frac{(-1)^n H_n g^{(n)}(1)}{n!},
    \end{align}
    where $g(x)=xf(x)$.
\end{lemma}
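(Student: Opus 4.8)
The plan is to obtain the identity as the $N\to\infty$ limit of a finite integration-by-parts expansion, the harmonic numbers $H_n$ entering through the iterated antiderivatives of $t^{-1}$. First I would write $f(t)=g(t)/t$ with $g(t)=tf(t)$, so that the task becomes expanding $\int_{1}^{x}g(t)\,t^{-1}\,dt$ (tacitly assuming the segment between $1$ and $x$ avoids the origin, so that this integral is defined and $\log|t|$ is a genuine antiderivative of $t^{-1}$ on it). The right family of antiderivatives to use is
\begin{align*}
    P_n(t)\defeq\frac{t^{n}}{n!}\bigl(\log|t|-H_n\bigr),\qquad n\ge 0.
\end{align*}
The first step is the elementary check that $P_0(t)=\log|t|$ and that $P_n'(t)=P_{n-1}(t)$ for every $n\ge 1$; differentiating $P_n$ by the product rule, this collapses to the identity $H_n-\tfrac1n=H_{n-1}$.

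Next I would integrate by parts repeatedly. A routine induction on $N$, each inductive step being one integration by parts using $P_{n+1}'=P_n$, yields the finite identity
\begin{align*}
    \int_{1}^{x}\frac{g(t)}{t}\,dt=\sum_{n=0}^{N}(-1)^{n}\Bigl[g^{(n)}(t)P_n(t)\Bigr]_{1}^{x}+(-1)^{N+1}\int_{1}^{x}g^{(N+1)}(t)P_N(t)\,dt.
\end{align*}
Because $P_n(1)=-H_n/n!$ and $P_n(x)=\frac{x^{n}}{n!}(\log|x|-H_n)$, the boundary term $\bigl[g^{(n)}(t)P_n(t)\bigr]_{1}^{x}$ expands into exactly the $n$th terms of the two sums in the statement. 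It thus remains to show the remainder integral tends to $0$ as $N\to\infty$ and that the two series in the statement converge.

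Controlling the remainder $R_N\defeq(-1)^{N+1}\int_{1}^{x}g^{(N+1)}(t)P_N(t)\,dt$ is the main obstacle and the only place analyticity is used. I would fix $\rho>\max(1,|x|)$ for which $g$ is analytic on the $\rho$-neighborhood of the segment joining $1$ and $x$ (automatic in the application, where $g(t)=e^{t}$ is entire); Cauchy's estimates then give $|g^{(n)}(t)|\le C\,n!\,\rho^{-n}$ uniformly on the segment. Since $|P_N(t)|\le\frac{|t|^{N}}{N!}\bigl(|\log|t||+H_N\bigr)$ and $\log|t|$ is integrable across the segment (including near the origin), multiplying and integrating yields $|R_N|\le C'(N+1)\bigl(\max(1,|x|)/\rho\bigr)^{N}(H_N+1)$, which vanishes because $H_N=O(\log N)$ and $\max(1,|x|)/\rho<1$. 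The same estimates bound the $n$th terms of both series in the statement by $O\bigl((\max(1,|x|)/\rho)^{n}\log n\bigr)$, so those series converge absolutely. Letting $N\to\infty$ in the finite identity now gives the lemma.
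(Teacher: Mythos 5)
Your proposal follows the same route as the paper---repeated integration by parts against the iterated antiderivatives $P_n(t)=\frac{t^n}{n!}\left(\log|t|-H_n\right)$ of $1/t$---and is correct; indeed it supplies exactly the two things the paper's proof leaves implicit, namely the explicit remainder term after $N$ steps and its decay via Cauchy's estimates, without which ``the series converge'' alone would not yield the identity. Note only that your remainder bound requires analyticity of $g$ on a complex $\rho$-neighborhood of the whole segment with $\rho>\max(1,|x|)$, which is genuinely stronger than the stated hypothesis that $f$ is analytic at $x$ and at $1$ (and without which the lemma as literally stated can fail), but, as you observe, this holds in the paper's sole application $g(t)=e^{t}$.
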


\begin{proof}
    Apply integration by parts repeatedly to
    \begin{align}
        \int_{1}^{x}f(t)dt=\int_{1}^{x}\frac{t f(t)}{t}dt,
    \end{align}
    integrating $1/t$ and differentiating $tf(t)$. Note that the $n$-fold antidifferentiation of $1/x$ yields
    \begin{align}
        \frac{x^n}{n!}\left(\log|x|-H_n\right),
    \end{align}
    which may be verified inductively. Since $f$ is analytic at $x\in\RR$ and $1$, the series converge.
\end{proof}

\begin{lemma}\label{lem2}
    We have
    \begin{align}
        \int_{0}^{1}\frac{1-e^{1-t}}{1-t}dt=\gamma-\text{\normalfont Ei}(1).
    \end{align}
\end{lemma}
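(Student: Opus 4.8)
The plan is to reduce the claim, by an elementary substitution, to an identity relating $\Ei(1)$ to $\gamma$ that comes out of unwinding the principal-value definition of $\Ei$ together with the integral formula \eqref{integralformula}. First I would substitute $s=1-t$, which turns the left-hand side into $\int_{0}^{1}\frac{1-e^{s}}{s}\,ds$; this integrand extends continuously to $s=0$ (its limit there is $-1$), so the integral is an ordinary one. It therefore suffices to prove
\begin{align}
    \int_{0}^{1}\frac{1-e^{s}}{s}\,ds=\gamma-\Ei(1).
\end{align}

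Next I would expand $\Ei(1)=\dashint_{-\infty}^{1}e^{t}/t\,dt$ straight from the definition: for $\varepsilon>0$ it equals $\int_{\varepsilon}^{1}e^{t}/t\,dt+\int_{-\infty}^{-\varepsilon}e^{t}/t\,dt$, and in the second integral the substitution $t=-u$ gives $-\int_{\varepsilon}^{\infty}e^{-u}/u\,du$. Splitting the latter at $u=1$ and recombining with the first piece yields $\Ei(1)=\lim_{\varepsilon\to0^{+}}\int_{\varepsilon}^{1}\frac{e^{t}-e^{-t}}{t}\,dt-\int_{1}^{\infty}\frac{e^{-u}}{u}\,du$; since $(e^{t}-e^{-t})/t\to2$ as $t\to0^{+}$, the first term is simply the convergent integral $\int_{0}^{1}\frac{e^{t}-e^{-t}}{t}\,dt$.

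Then I would read off the value of $\int_{1}^{\infty}e^{-u}/u\,du$ from \eqref{integralformula}. The functions $(1-e^{-t})/t$ and $e^{-1/t}/t$ are each integrable on $(0,1]$ (the first tends to $1$, the second to $0$ as $t\to0^{+}$), so \eqref{integralformula} splits as $\int_{0}^{1}\frac{1-e^{-t}}{t}\,dt-\int_{0}^{1}\frac{e^{-1/t}}{t}\,dt=\gamma$; substituting $u=1/t$ in the second integral turns it into $\int_{1}^{\infty}e^{-u}/u\,du$, whence $\int_{1}^{\infty}\frac{e^{-u}}{u}\,du=\int_{0}^{1}\frac{1-e^{-t}}{t}\,dt-\gamma$. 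Plugging this into the expression for $\Ei(1)$ and combining the integrands (the $e^{-t}$ and $-e^{-t}$ cancel, leaving $(e^{t}-1)/t$) gives $\Ei(1)=\gamma+\int_{0}^{1}\frac{e^{t}-1}{t}\,dt$, which rearranges to exactly the identity demanded above.

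I do not anticipate a real obstacle: the argument is a chain of substitutions and cancellations. The one point requiring care is the passage to the limit in the principal value — the two $\varepsilon$-truncated pieces must be combined \emph{before} letting $\varepsilon\to0^{+}$, since each diverges on its own, and one must note that the resulting integrand $(e^{t}-e^{-t})/t$ (and likewise $(e^{t}-1)/t$) stays bounded near the origin so the limit is an honest integral. The same observation justifies splitting \eqref{integralformula} into its two convergent halves.
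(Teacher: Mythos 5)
Your proof is correct and follows essentially the same route as the paper: both rest on the integral formula \eqref{integralformula}, the substitution $u=1/t$ converting $\int_0^1 e^{-1/t}/t\,dt$ into $\int_1^\infty e^{-u}/u\,du$, and the careful recombination of the $\varepsilon$-truncated halves of the principal value before passing to the limit. The only cosmetic difference is that the paper routes the bookkeeping through $\Ei(-1)$ and the decomposition $\dashint_{-1}^{1}=\dashint_{-\infty}^{1}-\int_{-\infty}^{-1}$, whereas you unwind the principal-value definition of $\Ei(1)$ directly and split at $u=1$; the underlying computation is the same.
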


\begin{proof}
    By \cite[pg.243]{EMintegral},
\begin{align}\label{EMalgebra}
    \gamma&=\int_{0}^{1}\frac{1-e^{-t}-e^{-1/t}}{t}dt=\int_{0}^{1}\frac{1-e^{-t}}{t}dt-\int_{0}^{1}\frac{e^{-1/t}}{t}dt\nonumber\\
    &=\int_{0}^{1}\frac{1-e^{-t}}{t}dt-\int_{1}^{\infty}\frac{e^{-t}}{t}dt=\int_{0}^{1}\frac{1-e^{-t}}{t}dt+\Ei(-1).
\end{align}
Then by \eqref{EMalgebra},
\begin{align}
    \gamma-\int_{0}^{1}\frac{1-e^t}{t}dt&=\int_{0}^{1}\frac{1-e^{-t}}{t}dt-\int_{0}^{1}\frac{1-e^{t}}{t}dt+\Ei(-1)\nonumber\\
    &=\int_{0}^{1}\frac{e^t-e^{-t}}{t}dt+\Ei(-1)\nonumber\\
        &=\lim_{\varepsilon\to 0^+}
        \left(\int_{\varepsilon}^{1}\frac{e^t}{t}dt-\int_{\varepsilon}^{1}\frac{e^{-t}}{t}dt\right)+\Ei(-1)\nonumber\\
        &=\lim_{\varepsilon\to 0^+}
        \left(\int_{\varepsilon}^{1}\frac{e^t}{t}dt+\int_{-1}^{-\varepsilon}\frac{e^{t}}{t}dt\right)+\Ei(-1)\nonumber\\
        &=\dashint_{-1}^{1}\frac{e^t}{t}dt+\Ei(-1)\nonumber\\
        &=\dashint_{-\infty}^{1}\frac{e^t}{t}dt-\int_{-\infty}^{-1}\frac{e^t}{t}dt+\Ei(-1)\nonumber\\
        &=\Ei(1)-\Ei(-1)+\Ei(-1)=\Ei(1).
\end{align}
    Consequently,
    \begin{align}
        \int_{0}^{1}\frac{1-e^{1-t}}{1-t}dt=\int_{0}^{1}\frac{1-e^t}{t}dt=\gamma-\Ei(1),
    \end{align}
    as desired.
\end{proof}

\begin{theorem}\label{thm1}
    The exponential integral yields the Puiseux representation
    \begin{align}
        \Ei(x)=\gamma+\log|x|+\sum_{n\geq1}\frac{x^n}{n(n!)},
    \end{align}
    for any $x\in\RR^{\times}$.
\end{theorem}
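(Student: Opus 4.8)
The plan is to recover $\Ei(x)$ from the relation $\Ei(x)-\Ei(1)=\int_{1}^{x}e^{t}/t\,dt$ and to evaluate the right-hand side with Lemma \ref{lem1}. For $x>0$ the relation is immediate from the definition of $\Ei$: subtracting the two principal values, the contribution over $(-\infty,-\varepsilon)$ cancels and what remains is a genuine integral over $[1,x]$ (or $[x,1]$) missing the singularity at the origin. For $x<0$ it continues to hold with $\int_{1}^{x}$ understood as a Cauchy principal value straddling $0$.

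Next I would apply Lemma \ref{lem1} with $f(t)=e^{t}/t$, so that $g(t)=tf(t)=e^{t}$ and hence $g^{(n)}(t)=e^{t}$ for every $n$. This yields
\begin{align*}
    \int_{1}^{x}\frac{e^{t}}{t}dt=e^{x}\sum_{n\geq0}\frac{(-x)^{n}}{n!}\bigl(\log|x|-H_{n}\bigr)+e\sum_{n\geq0}\frac{(-1)^{n}H_{n}}{n!}.
\end{align*}
Splitting the first series and using $\sum_{n\geq0}(-x)^{n}/n!=e^{-x}$, the term $e^{x}\log|x|\cdot e^{-x}=\log|x|$ drops out, leaving
\begin{align*}
    \int_{1}^{x}\frac{e^{t}}{t}dt=\log|x|-e^{x}\sum_{n\geq0}\frac{(-x)^{n}H_{n}}{n!}+e\sum_{n\geq0}\frac{(-1)^{n}H_{n}}{n!};
\end{align*}
the rearrangement is harmless since $H_{n}\leq n$ makes both series converge absolutely.

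The heart of the matter is the evaluation of $E(x)\defeq e^{x}\sum_{n\geq0}(-x)^{n}H_{n}/n!$. Writing $H_{n}=\int_{0}^{1}(1-u^{n})/(1-u)\,du$ and interchanging sum and integral --- legitimate because $(1-u^{n})/(1-u)=1+u+\cdots+u^{n-1}\leq n$ on $[0,1]$ --- one finds $\sum_{n\geq0}(-x)^{n}H_{n}/n!=\int_{0}^{1}(e^{-x}-e^{-xu})/(1-u)\,du$, so after multiplying by $e^{x}$ and substituting $v=1-u$,
\begin{align*}
    E(x)=\int_{0}^{1}\frac{1-e^{x(1-u)}}{1-u}du=\int_{0}^{1}\frac{1-e^{xv}}{v}dv.
\end{align*}
In particular $E(1)=\int_{0}^{1}(1-e^{1-u})/(1-u)\,du=\gamma-\Ei(1)$ by Lemma \ref{lem2}, which is exactly where the Euler--Mascheroni constant enters; and since $e\sum_{n\geq0}(-1)^{n}H_{n}/n!=E(1)$, the second series above equals $\gamma-\Ei(1)$ as well. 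I expect this to be the main obstacle: seeing that the harmonic-number generating function collapses, through Lemma \ref{lem2}, into $\gamma-\Ei(1)$; everything else is bookkeeping.

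To finish, termwise integration gives $-E(x)=\int_{0}^{1}(e^{xv}-1)/v\,dv=\sum_{n\geq1}x^{n}/(n(n!))$, and assembling the two displays of the second paragraph produces
\begin{align*}
    \int_{1}^{x}\frac{e^{t}}{t}dt=\log|x|+\gamma-\Ei(1)+\sum_{n\geq1}\frac{x^{n}}{n(n!)};
\end{align*}
adding $\Ei(1)$ to both sides gives the Puiseux representation for $x>0$. For $x<0$ the identical chain of equalities applies once $\int_{1}^{x}$ is read as a principal value, since after one integration by parts the only singularity at $t=0$ is the logarithmic one coming from $e^{t}\log|t|$, which is integrable and needs no further principal-value handling.
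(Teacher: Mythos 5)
Your proof is correct, and it departs from the paper's in one substantive way. Both arguments begin identically: reduce to $\int_1^x e^t/t\,dt$, apply Lemma \ref{lem1} with $g(t)=e^t$, and peel off the $\log|x|$ term. The paper then evaluates the $x$-dependent sum $e^x\sum_{n\geq0}(-1)^{n+1}H_nx^n/n!$ by citing the binomial--harmonic identity $\sum_{k=0}^{n}\binom{n}{k}H_k(-1)^{k+1}=1/n$ and forming a Cauchy product, and separately evaluates the constant sum $e\sum_{n\geq0}(-1)^nH_n/n!$ via the integral form of $H_n$ together with Lemma \ref{lem2}. You instead deploy the representation $H_n=\int_0^1(1-u^n)/(1-u)\,du$ uniformly in $x$, obtaining the closed form $E(x)=\int_0^1(1-e^{xv})/v\,dv$ for the entire harmonic generating function; the power series $\sum_{n\geq1}x^n/(n\,n!)$ then drops out by termwise integration, and the constant $\gamma-\Ei(1)$ is just the special value $E(1)$, again by Lemma \ref{lem2}. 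In effect you re-derive the paper's identity \eqref{convolution} analytically rather than combinatorially, which makes the argument more self-contained (no appeal to an external summation identity) and handles both sums by a single mechanism, at the cost of one extra interchange of sum and integral (which you justify adequately with the bound $(1-u^n)/(1-u)\leq n$). You also make explicit a point the paper glosses over: for $x<0$ the relation $\Ei(x)-\Ei(1)=\int_1^x e^t/t\,dt$ requires reading the right-hand side as a principal value across $t=0$, and your observation that one integration by parts converts the simple pole into an integrable logarithmic singularity is the right way to see that Lemma \ref{lem1} still applies there.
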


\begin{proof}
    Noting that the exponential function is entire, by Lemma \ref{lem1},
    \begin{align}\label{eiseriescalc}
        \Ei(x)-\Ei(1)=\dashint_{-\infty}^{x}\frac{e^t}{t}dt-\dashint_{-\infty}^{1}\frac{e^t}{t}dt=\int_{1}^{x}\frac{e^t}{t}dt\nonumber\\
        =\sum_{n\geq0}(-1)^n e^x\left(\frac{x^n}{n!}\left(\log|x|-H_n\right)\right)+e\sum_{n\geq0}\frac{(-1)^n H_n}{n!}\nonumber\\
        =\log|x|+e^x\sum_{n\geq0}\frac{(-1)^{n+1} H_n x^n}{n!}+e\sum_{n\geq0}\frac{(-1)^n H_n}{n!}.
    \end{align}
    By a special case of \cite[Corollary 3]{binomialharmonicsum},
    \begin{align}
        \sum_{k=0}^{n}{n\choose k}H_k(-1)^{k+1}=\frac{1}{n}
    \end{align}
    for $n>0$.\footnote{This relation may be alternatively verified using the integral form of the Harmonic numbers \cite{integralformofharmonic}, the binomial coefficients' recurrence relation \cite{binomialrecurrence}, or binomial inversion \cite{binomialinversion}.} Thus, by convolution of coefficients,
    \begin{align}\label{convolution}
        e^x\sum_{n\geq0}\frac{(-1)^{n+1} H_n x^n}{n!}=\left(\sum_{n\geq0}\frac{x^n}{n!}\right)\left(\sum_{n\geq0}\frac{(-1)^{n+1} H_n x^n}{n!}\right)=\sum_{n\geq1}\frac{x^n}{n(n!)}.
    \end{align}
    Putting \eqref{eiseriescalc} and \eqref{convolution} together, we obtain
    \begin{align}\label{penultimate}
        \Ei(x)=\log|x|+\sum_{n\geq1}\frac{x^n}{n(n!)}+\Ei(1)+e\sum_{n\geq0}\frac{(-1)^n H_n}{n!}.
    \end{align}
    Lastly, using the usual integral form of the Harmonic numbers, we have
    \begin{align}\label{interchange}
        e\sum_{n\geq0}\frac{(-1)^n H_n}{n!}&=e\sum_{n\geq0}\frac{(-1)^n}{n!}\int_{0}^{1}\frac{1-t^n}{1-t}dt=e\int_{0}^{1}\sum_{n\geq0}\frac{(-1)^n}{n!}\frac{1-t^n}{1-t}dt\nonumber\\
        &=e\int_{0}^{1}\frac{e^{-1}-e^{-t}}{1-t}dt=\int_{0}^{1}\frac{1-e^{1-t}}{1-t}dt=\gamma-\text{\normalfont Ei}(1)
    \end{align}
   by Lemma \ref{lem2}. The interchange of the summation and integral in \eqref{interchange} is justified since
    \begin{align}
        \int_{0}^{1}\sum_{n\geq0}\left|\frac{(-1)^n}{n!}\frac{1-t^n}{1-t}\right|dt=\int_{0}^{1}\sum_{n\geq0}\frac{1-t^n}{n!(1-t)}dt=\int_{0}^{1}\frac{e-e^{t}}{1-t}dt<\infty.
    \end{align}
    Therefore
    \begin{align}
        \Ei(x)=\gamma+\log|x|+\sum_{n\geq1}\frac{x^n}{n(n!)},
    \end{align}
    as desired.
\end{proof}

\bibliography{main}{}
\bibliographystyle{plain}

\end{document}